\newcommand{\J}{\mathrel{\mathscr J}} 
\newcommand{\R}{\mathrel{\mathscr R}} 
\newcommand{\eL}{\mathrel{\mathscr L}} 
\newcommand{\inv}{^{-1}}
\newtheorem{Thm}{Theorem}
\newtheorem{Prop}[Thm]{Proposition}
\newtheorem{Lemma}[Thm]{Lemma}
{\theoremstyle{definition}
}
{\theoremstyle{remark}
}
\theoremstyle{remark}
\theoremstyle{remark}
\theoremstyle{remark}
\theoremstyle{remark}
\theoremstyle{remark}
\title[Size of finite irreducible semigroups of rational matrices]{A short proof of a bound on the size of finite irreducible semigroups of rational matrices}
\author{Benjamin Steinberg}
\address[B.~Steinberg]{%
    Department of Mathematics\\
    City College of New York\\
    Convent Avenue at 138th Street\\
    New York, New York 10031\\
    USA}
\email{bsteinberg@ccny.cuny.edu}
\thanks{The author was supported by the NSF grant DMS-2452324, a Simons Foundation Collaboration Grant, award number 849561, the Australian Research Council Grant DP230103184, and Marsden Fund Grant MFP-VUW2411.}
\date{\today}
\keywords{Irreducible semigroups}
\subjclass[2020]{20M30, 20M35}
\begin{document}

\maketitle
\begin{abstract}
    I give a short proof of a recent result due to Kiefer and Ryz\-hi\-kov showing that a finite irreducible semigroup of $n\times n$ matrices has cardinality at most $3^{n^2}$.
\end{abstract}

\section{Introduction}
 Kiefer and Ryzhikov~\cite{KR.size} have recently proved that an irreducible semigroup of $n\times n$ matrices over the field of rational numbers has cardinality at most $3^{n^2}$.  This bound is a major improvement over previous bounds~\cite{bumpus}, and their proof has a number of new ideas.  However, the proof can be greatly simplified by exploiting the known representation theory of finite semigroups.  I present here a short proof using ideas very close to Minkowski's proof for finite groups.    Kiefer and Ryzhikov instead reduce to Minkowski's theorem in a combinatorial \textit{tour de force}.  A key point in my proof is Rhodes's result that irreducible finite semigroups of matrices (with zero) act faithfully on the left and right of their ($0$-)minimal ideal~\cite{Rhodeschar}.

\section{The main result}
A semigroup of matrices is \emph{irreducible} if it has no nontrivial invariant subspace.
  Let $S$ be an irreducible subsemigroup of $n\times n$-matrices over $\mathbb Q$.  Without loss of generality, I  assume that $0\in S$, since adjoining it doesn't change irreducibility.  I tacitly assume $S\neq 0$, to avoid trivialities.

An ideal of a semigroup with zero is \emph{$0$-minimal} if it contains no proper nonzero ideal.
A semigroup with zero is called \emph{generalized group mapping} if it has a $0$-minimal ideal $I$ such that $S$ acts faithfully on both the left and right of $I$, in which case $I$ must be $0$-simple and the unique $0$-minimal ideal of $S$.  A semigroup $S$ with zero is \emph{$0$-simple} if $S^2\neq 0$ and $S$ has no proper nonzero ideals.  See~\cite{Arbib} or~\cite[Chapter~4]{qtheor}.   Each $0$-simple semigroup $S$ contains a maximal subgroup $G$ in $S\setminus \{0\}$ (i.e., a subsemigroup maximal for the property of being a group), and any two maximal subgroups are isomorphic~\cite[Appendix~A]{qtheor}.  

Recall that two elements of a semigroup are $\mathscr J$-equivalent (respectively, $\mathscr R$-equivalent, respectively $\mathscr L$-equivalent) if they generate the same principal two-sided  (respectively, right, respectively, left) ideal.   One puts $\mathscr H=\mathscr R\cap \mathscr L$.  Note that each maximal subgroup is an $\mathscr H$-class~\cite[Appendix~A]{qtheor}.  Finite semigroups are stable, meaning that \[sx\J s\iff sx\R s\ \text{and}\ xs\J s\iff xs\eL s.\]   See~\cite[Theorem~A.2]{qtheor}.

\begin{Lemma}[Rhodes~{\cite[Lemma~1.5]{Rhodeschar}}]\label{l:ggm}
Let $S$ be a finite irreducible semigroup containing $0$ over a field $K$.  Then $S$ is generalized group mapping.
\end{Lemma}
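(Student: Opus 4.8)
The plan is to argue directly from the defining action of $S$ on $V=K^n$, using only simplicity and faithfulness. Since $S$ is a semigroup of matrices with $0\in S$ acting as the zero matrix, the inclusion $\rho\colon S\hookrightarrow \End_K(V)$ is a faithful representation, and irreducibility says exactly that $V$ is a simple $S$-module. As $S$ is finite and $S\neq 0$, I would first pick an ideal $I$ that is minimal among the nonzero ideals; it is then $0$-minimal. The subspace $IV$ is an $S$-submodule of $V$, and it is nonzero because faithfulness prevents any nonzero element of $S$ from annihilating $V$; by simplicity $IV=V$. This single identity drives everything that follows. For instance, it immediately gives uniqueness and $0$-simplicity of $I$: two distinct $0$-minimal ideals $I_1,I_2$ satisfy $I_1\cap I_2=0$, hence $I_1I_2=0$, yet $I_1I_2V=I_1(I_2V)=I_1V=V\neq 0$; the same computation with $I_1=I_2=I$ shows $I^2\neq 0$.

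The core of the proof is the faithfulness of the left and right translation actions of $S$ on $I$. For the left action, suppose $sx=tx$ for all $x\in I$. Since $I$ is an ideal, $sx,tx\in I$, so passing to matrices gives $\rho(s)\rho(x)=\rho(t)\rho(x)$ for every $x\in I$; hence $\rho(s)$ and $\rho(t)$ agree on the span of $\{xv:x\in I,\ v\in V\}$, which is precisely $IV=V$. Therefore $\rho(s)=\rho(t)$, and faithfulness yields $s=t$. For the right action I would dualize: the row space $V^{*}\cong K^{n}$ is a right $S$-module whose invariant subspaces are exactly the orthogonal complements of the left-invariant subspaces of $V$, so $V^{*}$ is again simple. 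The same argument, now producing $V^{*}I=V^{*}$, shows that $xs=xt$ for all $x\in I$ forces $s=t$.

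With a $0$-minimal ideal $I$ on which both one-sided actions of $S$ are faithful, $S$ is generalized group mapping by definition (and, as noted above, $I$ is then the unique $0$-minimal ideal and is $0$-simple). I expect the only point requiring care to be the passage to the dual module: one must check that an invariant subspace of $V^{*}$ under the right action corresponds, via orthogonal complement, to an invariant subspace of $V$ under the left action, so that irreducibility transfers from the left to the right. Everything else is a short combination of the submodule identity $IV=V$ with the faithfulness of the matrix representation.
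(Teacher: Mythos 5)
Your proof is correct, and it takes a genuinely different route from the paper's. The paper argues at the level of the linear span: by a cited result of Clifford and Preston, the $K$-span $A$ of an irreducible matrix semigroup is a simple algebra containing the identity matrix; since the span of $I$ is a nonzero ideal of $A$, simplicity gives $1=\sum_{i}c_is_i$ with $s_i\in I$, and then $s=s1=\sum_i c_iss_i=\sum_i c_its_i=t$ kills both one-sided kernels at once, with no dualization. You instead stay at the module level: the identity $IV=V$ (and its mirror $V^*I=V^*$) substitutes for the algebra-level fact $1\in\mathrm{span}(I)$, and the duality step you flag as the point requiring care does check out --- if $W\subseteq V^*$ is right-invariant then $W^{\perp}=\{v\in V: w(v)=0 \text{ for all } w\in W\}$ satisfies $w(sv)=(ws)(v)=0$, so $W^{\perp}$ is left-invariant, and $\dim W^{\perp}=n-\dim W$ makes the correspondence transfer irreducibility to $V^*$. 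What your route buys is self-containedness: it uses only irreducibility and faithfulness of the inclusion $\rho\colon S\hookrightarrow \End_K(V)$, avoiding the two nontrivial citations (simplicity of the span, and that the span is unital) on which the paper's three-line computation rests; your observation that $IV=V$ also yields uniqueness of the $0$-minimal ideal and $I^2\neq 0$ directly is a nice bonus, though the paper treats those as consequences built into the definition of generalized group mapping. What the paper's route buys is brevity and symmetry, since the single element $1\in\mathrm{span}(I)$ handles left and right simultaneously. One simplification you might like: the dual module can be avoided entirely for the right action by setting $U=\bigcap_{x\in I}\ker\rho(x)$; this is $S$-invariant because $x(sv)=(xs)v=0$ for $xs\in I$, and proper since $I\neq 0$, hence $U=0$ by irreducibility; if $xs=xt$ for all $x\in I$, the image of $\rho(s)-\rho(t)$ lies in $U$, so $\rho(s)=\rho(t)$ and $s=t$.
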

\begin{proof}
Let $I$ be any $0$-minimal ideal of $S$.  The $K$-span $A$ of $S$ is a simple algebra~\cite[Theorem~5.7]{CP}.  The $K$-span of $I$ is an ideal of $A$, and hence is $A$ by simplicity.  But $A$ contains the identity matrix $1$~\cite[Lemma~5.32]{CP}.   Thus $1$ is in the $K$-span of $I$, say $1=\sum_{i=1}^k c_is_i$ with $c_i\in K$, $s_i\in I$.   If $s$ and $t$ act the same on the left of $I$, then \[s=s1=\sum_{i=1}^k c_iss_i= \sum_{i=1}^k c_its_i=t1=t\] and dually for the right.  Thus $S$ is generalized group mapping.
\end{proof}

The following is a special case of Proposition 3.28 of Chapter 8 of~\cite{Arbib}.

\begin{Prop}\label{p:ggm1to1}
Let $S$ be a finite generalized group mapping semigroup with $0$-minimal ideal $I$.  Let $G$ be a maximal subgroup of $I\setminus\{0\}$.  Then a homomorphism $\varphi\colon S\to T$ is injective if and only if $\varphi(I)\cap \varphi(0)=\emptyset$ and $\varphi|_G$ is injective.
\end{Prop}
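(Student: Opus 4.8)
The forward implication is immediate: if $\varphi$ is injective then so is its restriction $\varphi|_G$, and no nonzero element of $I$ can share the image of $0$, so $\varphi(I)\cap\varphi(0)=\emptyset$. The substance is the converse, and my plan is to first reduce it to the single claim that $\varphi$ is injective on $I$, and then to prove that claim from the Rees coordinatization of the $0$-simple semigroup $I$ together with the faithfulness built into the generalized group mapping hypothesis.

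For the reduction, suppose $\varphi(s)=\varphi(t)$ with $s,t\in S$. For every $x\in I$ we then have $\varphi(sx)=\varphi(s)\varphi(x)=\varphi(t)\varphi(x)=\varphi(tx)$, and since $I$ is an ideal both $sx$ and $tx$ lie in $I$. Granting injectivity of $\varphi$ on $I$, this forces $sx=tx$ for all $x\in I$; because $S$ acts faithfully on the left of $I$, I conclude $s=t$. Thus everything comes down to showing $\varphi|_I$ is injective.

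To prove $\varphi|_I$ injective I would work with the congruence $\rho=\ker(\varphi|_I)$ on $I$ and the Rees matrix description $I\cong M^0(G;A,B;P)$, with $P$ regular and normalized and with $G$ identified as the group $\mathscr H$-class $H_e$ of an idempotent $e$. The hypothesis $\varphi(I)\cap\varphi(0)=\emptyset$ says precisely that $\rho$ is $0$-restricted, so it is governed by a linked triple consisting of a normal subgroup $N\trianglelefteq G$ together with equivalence relations on the index sets $A$ and $B$; the hypothesis that $\varphi|_G$ is injective forces $N=\{1\}$, which in turn makes $\rho$ trivial on every $\mathscr H$-class. The heart of the matter is then to rule out any nontrivial merging of $\mathscr R$-classes or of $\mathscr L$-classes. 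Here I would argue that if $\rho$ identified two elements lying in distinct $\mathscr L$-classes but a common $\mathscr R$-class, then right-multiplying the congruent pair by an arbitrary element of $I$ keeps the two products inside one $\mathscr H$-class, and the triviality of $\rho$ on that $\mathscr H$-class forces the two corresponding rows of $P$ to be left-proportional by a single fixed group element.

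The main obstacle is exactly this last step, and it is where the generalized group mapping hypothesis does the essential work: because $S$, and hence $I$ acting on itself, acts faithfully on the left of $I$, no two distinct rows of $P$ can be left-proportional, and dually for columns under the right action. This contradiction forces both index equivalences to be trivial, so $\rho$ is trivial and $\varphi|_I$ is injective. Translating the faithfulness of the two-sided action into these proportionality statements, and carrying out the bookkeeping that reduces an arbitrary merging to a same-$\mathscr R$-class, same-$\mathscr L$-class comparison, is the one genuinely technical point; everything else is formal.
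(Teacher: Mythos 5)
Your proposal is correct, but it takes a genuinely different route from the paper's. You first reduce injectivity of $\varphi$ to injectivity of $\varphi|_I$ (a clean reduction using only left faithfulness), and then analyze the congruence $\rho=\ker(\varphi|_I)$ through the Rees coordinatization $I\cong M^0(G;A,B;P)$ and the linked-triple classification of $0$-restricted congruences on a completely $0$-simple semigroup: injectivity of $\varphi|_G$ forces $N=\{1\}$, and the linking conditions convert any merging of $\mathscr R$- or $\mathscr L$-classes into a pair of left- (resp.\ right-) proportional rows (resp.\ columns) of $P$, which is precisely what faithfulness of the action of $S$ (and hence of $I$, since faithfulness of the $S$-action restricts to $I$) on $I$ forbids. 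This is essentially the route behind the general result the paper cites (Proposition~3.28 of Chapter~8 of~\cite{Arbib}), and it buys more than is needed here: a full structural description of the $0$-restricted quotients. The paper's proof is shorter and coordinate-free: two-sided faithfulness shows $s=s'$ if and only if $xsy=xs'y$ for all $x,y\in I$; given $\varphi(s)=\varphi(s')$, the separation hypothesis forces $xsy$ and $xs'y$ to be both $0$ or both nonzero in $I$, in which case $xsy\J x\J y\J xs'y$ together with stability yields $xsy\HH xs'y$, and Green's lemma provides translations $r\mapsto arb$ carrying that $\mathscr H$-class bijectively onto $G$, where injectivity of $\varphi|_G$ finishes the argument. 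So the paper trades your congruence machinery for stability plus Green's lemma, avoiding Rees matrices entirely; conversely, the ``bookkeeping'' you defer (matching zero patterns of $P$ and reducing an arbitrary $\rho$-pair to same-$\mathscr R$-class and same-$\mathscr L$-class comparisons) is exactly what the linked-triple theorem packages, so your sketch becomes a complete proof once you cite that classification (e.g., Clifford--Preston or Howie). One reading convention you handled correctly but should state explicitly: since $0\in I$, the hypothesis $\varphi(I)\cap\varphi(0)=\emptyset$ must be read as saying $\varphi$ separates $\varphi(0)$ from the images of the nonzero elements of $I$, i.e.\ that $\rho$ is $0$-restricted, which is how both you and the paper use it.
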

\begin{proof}
For the nontrivial direction,  assume $\varphi(s)=\varphi(s')$.  Note that $s=s'$ in $S$ if and only if $xsy=xs'y$ for all $x,y\in I$ (see Lemma~4.6.23~\cite{qtheor}), since if $s\neq s'$, there exists $x\in I$ with $xs\neq xs'$ by faithfulness of the right action, in which case by faithfulness of the left action there is $y\in I$ with $xsy\neq xs'y$.   So, let $x,y\in I$.  Since $\varphi(xsy)=\varphi(xs'y)$, the hypotheses on $\varphi$ guarantee that either $xsy=0=xs'y$ or $xsy,xs'y\in I$.  In the latter case, $xsy\mathrel{\mathscr J} x\J y\mathrel{\mathscr J} xs'y$, and hence by  stability,  $xsy\mathrel{\mathscr H} xs'y$.  But then by Green's lemma~\cite[Lemma~A.3.1]{qtheor}, there exist $a,b$ so that $r\mapsto arb$ is a bijection from the $\mathscr H$-class of $xsy,xs'y$ to $G$.  Then $\varphi(axsyb)=\varphi(axs'yb)$ implies that $axsyb=axs'yb$ since $\varphi|_G$ is injective.  But then $xsy=xs'y$, as was required.  Thus $\varphi$ is injective.
\end{proof}

Finally, we record here a well-known lemma, at least for groups.

\begin{Lemma}\label{l:overZ}
Let $S$ be a finite subsemigroup of $M_n(\mathbb Q)$.  Then $S$ is conjugate to a subsemigroup of $M_n(\mathbb Z)$.
\end{Lemma}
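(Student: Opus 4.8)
The plan is to mimic the classical lattice argument for finite groups, adapted to account for the absence of an identity and the possible presence of singular elements in $S$. The key object is an $S$-invariant full-rank lattice in $\mathbb Q^n$; once such a lattice is found, changing to one of its $\mathbb Z$-bases conjugates $S$ into $M_n(\mathbb Z)$.

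Concretely, I would set $L=\mathbb Z^n+\sum_{s\in S}s\mathbb Z^n$, viewing elements of $\mathbb Q^n$ as columns. First I would check that $L$ is invariant under $S$: for $t\in S$ we have $t\mathbb Z^n\subseteq \sum_{s\in S}s\mathbb Z^n\subseteq L$ and, since $ts\in S$ for every $s\in S$, also $ts\mathbb Z^n\subseteq L$, whence $tL\subseteq L$. Next I would verify that $L$ is a lattice of full rank. It is finitely generated as an abelian group, being generated by the finitely many columns of the identity matrix and of the matrices in the finite set $S$; being a finitely generated torsion-free abelian group, it is free, and since it contains $\mathbb Z^n$ it has rank exactly $n$ and spans $\mathbb Q^n$.

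With $L$ in hand, I would pick a $\mathbb Z$-basis $v_1,\dots,v_n$ of $L$ and let $P\in GL_n(\mathbb Q)$ be the matrix whose columns are these vectors; $P$ is invertible precisely because $L$ spans $\mathbb Q^n$. For each $s\in S$, invariance gives $sv_j\in L=\bigoplus_i\mathbb Z v_i$, so $sv_j=\sum_i a_{ij}v_i$ with integer coefficients $a_{ij}$; in matrix form $sP=PA$ with $A\in M_n(\mathbb Z)$, that is, $P\inv sP=A\in M_n(\mathbb Z)$. Hence $P\inv SP\subseteq M_n(\mathbb Z)$, as required.

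The only point requiring care --- and the reason the group proof does not transcribe verbatim --- is ensuring that $L$ has full rank. Since $S$ need not contain the identity and may consist of singular matrices, the summand $\mathbb Z^n$ must be included explicitly; this guarantees both that $L$ spans $\mathbb Q^n$ and that the resulting $P$ is invertible, which is exactly what makes the conjugation well defined.
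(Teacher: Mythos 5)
Your proposal is correct and follows essentially the same route as the paper: both take the lattice generated by a basis of $\mathbb Q^n$ together with its images under the finite semigroup $S$, observe that it is $S$-invariant, finitely generated, free of rank $n$ (full rank being guaranteed by including the basis itself, since $S$ need not contain the identity), and then rewrite $S$ in an integral basis of this lattice. Your version merely spells out the change-of-basis matrix $P$ explicitly, which the paper leaves implicit.
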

\begin{proof}
Let $b_1,\ldots, b_n$ be a basis for $\mathbb Q^n$.  By finiteness of $S$, the subgroup  $L=\langle \{b_1,\ldots, b_n\}\cup Sb_1\cup\cdots\cup Sb_n\rangle$ is an $S$-invariant finitely generated subgroup of $\mathbb Q^n$, and hence is free abelian.  Moreover, it has rank $n$ and spans $\mathbb Q^n$ since it contains $b_1,\ldots, b_n$.  Then $S$ is represented by integer matrices with respect to any integral basis of $L$.
\end{proof}

Now I prove the upper bound of~\cite{KR.size}.  I will use an old result of Minkowski that the kernel of the map $GL_n(\mathbb Z)\to GL_n(\mathbb Z/p\mathbb Z)$ is torsion-free for $p$ an odd prime.  For completeness, I'll sketch the proof.

\begin{Thm}[Minkowski]
Let $p$ be an odd prime.  Then $\ker (\mathrm{GL}_n(\mathbb Z)\to \mathrm{GL}_n(\mathbb Z/p\mathbb Z))$ is torsion-free.
\end{Thm}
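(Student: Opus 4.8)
The plan is to show that a nontrivial element of the kernel cannot have finite order. Suppose $A \in \mathrm{GL}_n(\mathbb Z)$ lies in the kernel, so that $A \equiv 1 \pmod p$, and write $A = 1 + p^k B$ where $k \geq 1$ and $B$ is an integer matrix with $B \not\equiv 0 \pmod p$; this is the natural $p$-adic valuation bookkeeping that makes the argument work. The goal is to prove that if $A \neq 1$ then $A^m \neq 1$ for every $m \geq 1$, which gives torsion-freeness.

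First I would reduce to the case of prime order. If $A$ has finite order $m > 1$, then some power of $A$ has prime order $\ell$, and this power still lies in the kernel (which is a normal subgroup), so it suffices to derive a contradiction from $A^\ell = 1$ with $A \neq 1$ and $\ell$ prime. There are two subcases depending on whether $\ell = p$ or $\ell \neq p$, though the binomial estimate below handles both uniformly once the valuation is tracked carefully.

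The key computation is to expand $A^\ell = (1 + p^k B)^\ell$ by the binomial theorem and examine it modulo a suitable power of $p$. I would write
\[
A^\ell = 1 + \ell p^k B + \binom{\ell}{2} p^{2k} B^2 + \cdots + p^{\ell k} B^\ell,
\]
and observe that setting this equal to $1$ forces $\ell p^k B + (\text{higher order in } p) = 0$. The leading surviving term is $\ell p^k B$. When $\ell \neq p$, the factor $\ell$ is a $p$-adic unit, so the term $\ell p^k B$ has $p$-adic valuation exactly $k$ in its nonzero entries, while every other term in the sum has valuation at least $2k > k$; hence the whole sum cannot vanish, contradicting $B \not\equiv 0 \pmod p$. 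When $\ell = p$, one uses that $\binom{p}{j}$ is divisible by $p$ for $1 \leq j \leq p-1$ together with the hypothesis that $p$ is \emph{odd} (so that the final term $p^{pk}B^p$ and the cross terms all have valuation strictly greater than $k+1$, whereas $p \cdot p^k B = p^{k+1}B$ has valuation exactly $k+1$), again forcing the sum to be nonzero.

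The main obstacle is precisely the odd-prime hypothesis in the case $\ell = p = 2$: the argument above breaks because the term $\binom{2}{2}p^{2k}B^2 = p^{2k}B^2$ can have the same valuation $k+1$ as the leading term $2 p^k B = p^{k+1}B$ when $k = 1$, so the valuations collide and cancellation becomes possible (indeed $-1 = 1 + 2\cdot(-1)$ is a genuine $2$-torsion element of the kernel of reduction mod $2$). Thus the heart of the proof is to verify that for odd $p$ the valuation of the leading term is strictly smaller than that of all others, which comes down to the elementary fact that $v_p\!\left(\binom{p}{j}p^{jk}\right) \geq k+1 + (j-1)k > k+1$ for $j \geq 2$, and I would present this valuation inequality as the one line that genuinely requires $p \neq 2$.
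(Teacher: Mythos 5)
Your proposal is correct and takes essentially the same route as the paper: reduce to an element of prime order in the kernel, write $A=1+p^kB$ with $B\not\equiv 0 \bmod p$, expand by the binomial theorem, and compare $p$-divisibilities of the leading term against the rest (the paper merges your two cases by first deducing $\ell=p$ from $\ell C\equiv 0\bmod p$ and then dividing by $p$). One small slip: your closing inequality $v_p\bigl(\binom{p}{j}p^{jk}\bigr)\geq k+1+(j-1)k$ fails at $j=p$ since $\binom{p}{p}=1$; that final term must be handled as in your preceding paragraph, via $pk\geq k+2>k+1$ for $p\geq 3$ and $k\geq 1$, which is precisely where the oddness of $p$ is used.
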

\begin{proof}
    If it is not torsion-free, it has an element $A$ of prime order $q$.  So $A^q=1$ and $A=1+p^kC$ where $C\not\equiv 0\bmod p$ and $k\geq 1$.   Therefore, \[1=(1+p^kC)^q=1+qp^kC+\sum_{i=2}^q\binom{q}{i}p^{ik}C^i,\] and so $qC=-\sum_{i=2}^q\binom{q}{i}p^{k(i-1)}C^i\equiv 0\bmod p$.  Thus $q=p$ as $C\not\equiv 0\bmod p$.  Dividing both sides by $p$ yields $C=-\sum_{i=2}^{p-1}\binom{p}{i}p^{k(i-1)-1}C^i-p^{k(p-1)-1}C^p$.  Since $p\mid \binom{p}{i}$ for $1<i<p$, it follows $C\equiv -p^{k(p-1)-1}C^p\bmod p$, contradicting $C\not\equiv 0\bmod p$ as $p> 2$.  
\end{proof}

To improve the bound a little bit, I shall also use that the kernel has only $2$-torsion when $p=2$ (see~\cite{Serrebound}).  Recall that a semigroup is \emph{aperiodic} if all its maximal subgroups are trivial.

\begin{Thm}
Let $S\subseteq M_n(\mathbb Q)$ be irreducible.  Then $|S|\leq 3^{n^2}$.  If the maximal subgroup $G$ of the unique ($0$-)minimal ideal of $S$ has odd order (e.g., if $S$ is aperiodic), then $|S|\leq 2^{n^2}$. 
\end{Thm}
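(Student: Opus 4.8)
The plan is to fix a suitable prime $p$ and show that entrywise reduction modulo $p$ is injective on $S$; since that map lands in $M_n(\mathbb{Z}/p\mathbb{Z})$, injectivity gives $|S|\le |M_n(\mathbb{Z}/p\mathbb{Z})|=p^{n^2}$ immediately, so the bounds $3^{n^2}$ and $2^{n^2}$ correspond to $p=3$ and $p=2$. Concretely, I would first apply Lemma~\ref{l:overZ} to conjugate $S$ into $M_n(\mathbb{Z})$ (this alters neither $|S|$ nor any semigroup-theoretic structure) and Lemma~\ref{l:ggm} to know that $S$ is generalized group mapping with unique $0$-minimal ideal $I$ and a maximal subgroup $G\subseteq I\setminus\{0\}$ whose identity is an idempotent matrix $e$. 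For each prime $p$ let $\varphi_p\colon S\to M_n(\mathbb{Z}/p\mathbb{Z})$ be reduction, a semigroup homomorphism with $\varphi_p(0)=0$. By Proposition~\ref{p:ggm1to1}, injectivity of $\varphi_p$ reduces to two statements: (a) no nonzero element of $I$ has image $\varphi_p(0)=0$, and (b) $\varphi_p$ is injective on $G$.

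For (a) I would use $0$-simplicity: if $s\in I\setminus\{0\}$ then $S^1sS^1$ is a nonzero ideal of $S$ contained in $I$, hence equals $I$, so $e=usv$ for some $u,v\in S^1\subseteq M_n(\mathbb{Z})$. If $s\equiv 0\pmod p$ then $e=usv\equiv 0\pmod p$, so $e=pC$ with $C\in M_n(\mathbb{Z})$; but $e=e^2=p^2C^2$ forces $e\equiv 0\pmod{p^2}$ and, iterating, $e=0$, contradicting that $e$ is a nonzero idempotent. This argument is valid for \emph{every} prime $p$.

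For (b) the key device is to realize $G$, whose identity $e$ need not be the identity matrix $1$, as a genuine subgroup of $\mathrm{GL}_n(\mathbb{Z})$. I would set $\iota(g)=g+(1-e)$. Using $ge=eg=g$ and $e^2=e$ one checks that $\iota$ is an injective homomorphism with $\iota(e)=1$; since $G$ is finite, $\iota(g)^{|G|}=\iota(e)=1$, so each $\iota(g)$ has finite order and lies in $\mathrm{GL}_n(\mathbb{Z})$, making $\iota(G)$ a finite subgroup. As $\iota$ is injective and commutes with reduction mod $p$, the map $\varphi_p|_G$ is injective if and only if reduction is injective on $\iota(G)$. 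Now if $h_1,h_2\in\iota(G)$ agree mod $p$, then $h_1h_2^{-1}\in\iota(G)$ is a torsion element of $\ker(\mathrm{GL}_n(\mathbb{Z})\to\mathrm{GL}_n(\mathbb{Z}/p\mathbb{Z}))$. For $p$ odd this kernel is torsion-free by Minkowski's theorem, so $h_1=h_2$; for $p=2$ the kernel has only $2$-torsion, which yields the same conclusion precisely when $h_1h_2^{-1}$ has odd order, and this is guaranteed when $|G|$ is odd.

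Combining, taking $p=3$ establishes (a) and (b) unconditionally, so $\varphi_3$ is injective and $|S|\le 3^{n^2}$; when $|G|$ is odd (in particular when $S$ is aperiodic, so that $G$ is trivial) taking $p=2$ gives $|S|\le 2^{n^2}$. I expect the main obstacle to be exactly step (b): the maximal subgroup has identity an idempotent $e\neq 1$, so one cannot directly quote Minkowski's statement about $\mathrm{GL}_n(\mathbb{Z})$. The embedding $g\mapsto g+(1-e)$ is the single idea that resolves this, after which both the odd-$p$ and the $p=2$ cases follow from the torsion properties of the reduction kernel. Step (a) and the final assembly should be routine.
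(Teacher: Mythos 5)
Your proposal is correct, and its skeleton is exactly the paper's: conjugate into $M_n(\mathbb Z)$ via Lemma~\ref{l:overZ}, invoke Rhodes's Lemma~\ref{l:ggm} to get the generalized group mapping structure, verify the two hypotheses of Proposition~\ref{p:ggm1to1} for reduction mod $p\in\{2,3\}$ (with $p$ chosen by the parity of $|G|$), and close with Minkowski's theorem and its $p=2$ variant. Where you genuinely diverge is in how the two hypotheses are checked. The paper makes a single move that does double duty: it picks an integral basis adapted to the splitting $\mathbb Z^n=e\mathbb Z^n\oplus(1-e)\mathbb Z^n$, after which $e$ is the standard rank-$r$ block idempotent; then $\pi(e)\neq 0$ is immediate (so $\pi^{-1}(0)\cap I$, an ideal inside the $0$-minimal ideal $I$ missing $e$, must be $\{0\}$), and $G$ visibly sits in a corner copy of $\mathrm{GL}_r(\mathbb Z)$, so Minkowski applies in dimension $r$. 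You avoid the basis change entirely: for (a) you use $0$-minimality to write $e=usv$ and then the observation that a nonzero integral idempotent cannot vanish mod $p$, since $e=e^k$ forces every entry to be divisible by $p^k$ for all $k$; for (b) you unitize via $g\mapsto g+(1-e)$, which is indeed an injective homomorphism onto a finite subgroup of $\mathrm{GL}_n(\mathbb Z)$ compatible with reduction mod $p$, and you apply Minkowski in dimension $n$ (your handling of $p=2$ via the only-$2$-torsion property of the kernel, using that $h_1h_2^{-1}$ has odd order dividing $|G|$, is also correct). Both routes are fully rigorous and of comparable length: the paper's adapted basis is one idea serving both steps and localizes Minkowski to the rank of $e$, while your version trades it for two self-contained computations and never needs the splitting of $\mathbb Z^n$ as a direct sum of free summands; the dimension difference, $r$ versus $n$, is immaterial to the stated bound $p^{n^2}$.
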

\begin{proof}
Without loss of generality assume that $S$ contains $0$ and, by Lemma~\ref{l:overZ}, that  $S\subseteq M_n(\mathbb Z)$.  By Lemma~\ref{l:ggm}, $S$ is generalized group mapping with unique $0$-minimal ideal $I$.  Let $G$ be the maximal subgroup of $I$ and  $0\neq e$ its identity. 
Let $p=2$ if $G$ has odd order, and $p=3$ if $G$ has even order.    
  Suppose that $e$ has rank $r$. Then $e\mathbb Z^n$ is a free abelian group of rank $r$, and  $\mathbb Z^n=e\mathbb Z^n\oplus (1-e)\mathbb Z^n$.  Thus by choosing an integral basis for $\mathbb Z^n$ adapted to this direct sum, we may assume without loss of generality that \[e=\begin{bmatrix} 1_r & 0_{r,n-r}\\ 0_{n-r,r} & 0_{n-r}\end{bmatrix}.\]  
Then the canonical projection $\pi\colon M_n(\mathbb Z)\to M_n(\mathbb Z/p\mathbb Z)$ satisfies $\pi(e)\neq 0$, and  thus $\pi$ separates $0$ from $I$ (as $I$ is $0$-minimal and $\pi\inv(0)\cap I$ is an ideal).  Note that $G$ then sits inside of
\[\begin{bmatrix} GL_r(\mathbb Z) & 0_{r,n-r}\\ 0_{n-r,r} & 0_{n-r}\end{bmatrix}\] and so 
 $\pi|_G$ can be identified with the restriction of the  projection $GL_r(\mathbb Z)\to GL_r(\mathbb Z/p\mathbb Z)$.  Minkowski's Theorem (or its $p=2$ variant) then implies $\pi|_G$ is injective.  Therefore, $\pi|_S$ is injective by Proposition~\ref{p:ggm1to1}, and so $|S|\leq p^{n^2}$, as required. 
%
%
%
\end{proof}

\bibliographystyle{alpha}
\bibliography{standard2}
\end{document}